\documentclass[reqno, 10pt]{amsart}
\usepackage{amssymb,mathrsfs}
\usepackage[usenames, dvipsnames]{color}
\usepackage{esint}
\usepackage{hyperref}
\usepackage{todonotes}
\usepackage{verbatim}
\usepackage{enumerate}

\usepackage{cite}
\usepackage[nobysame,non-sorted-cites]{amsrefs}
\def\MR#1{}

\theoremstyle{plain}
\newtheorem{theorem}{Theorem}[section]
\newtheorem{lemma}[theorem]{Lemma}

\theoremstyle{definition}

\theoremstyle{remark}
\newtheorem{remark}[theorem]{Remark}

\newcommand{\dv}{\operatorname{div}}
\newcommand{\osc}{\operatorname{osc}}

\numberwithin{equation}{section}

\newcommand{\bR}{\mathbb{R}}

\newcommand{\bS}{\mathbb{S}}

\newcommand\cD{\mathcal{D}}

\makeatletter
\def\dashint{\operatorname%
{\,\,\text{\bf--}\kern-.98em\DOTSI\intop\ilimits@\!\!}}
\makeatother

\newcommand\dashnorm[2]{\nparallel\kern-.2em #1 \Vert_{#2}}

\begin{document}
\title[Insulated conductivity problem with partially flat inclusions]{Gradient estimates for the insulated conductivity problem with partially flat inclusions}

\author[H. Dong]{Hongjie Dong}

\author[Z. Yang]{Zhuolun Yang}

\author[H. Zhu]{Hanye Zhu}

\address[H. Dong]{Division of Applied Mathematics, Brown University, 182 George Street, Providence, RI 02912, USA}
\email{Hongjie\_Dong@brown.edu}

\address[Z. Yang]{Division of Applied Mathematics, Brown University, 182 George Street, Providence, RI 02912, USA; Department of Mathematics, The Ohio State University, 231 West 18th Avenue, Columbus, OH 43210, USA }
\email{yang.8242@osu.edu}

\address[H. Zhu]{Department of Mathematics, Duke University, 120 Science Drive, Durham, NC 27708, USA}
\email{hanye.zhu@duke.edu}

\thanks{H. Dong was partially supported by the NSF under agreement DMS-2350129.}
\thanks{Z. Yang was partially supported by the AMS-Simons Travel Grant and the NSF under agreements DMS-2510251.}
\thanks{H. Zhu was partially supported by the NSF under agreements DMS-2006372 and DMS-2306726.}

\subjclass[2020]{35B44, 35J25, 35Q74, 74E30, 74G70}

\keywords{Gradient estimates, high contrast coefficients, insulated conductivity problem, partially flat inclusions}

\begin{abstract}
We study the insulated conductivity problem with inclusions embedded in a bounded domain in $\bR^n$. It was known that in the setting of strictly convex inclusions, the gradient of solutions may blow up as the distance between inclusions approaches 0. The optimal blow-up rate was proved in \cites{DLY} and was achieved in the presence of a uniform background gradient field. In this paper, we demonstrate that when the inclusions are partially flat, the gradient of solutions does not blow up under any uniform background fields.
\end{abstract}

\maketitle
\section{Introduction and Main Results}
When two conducting or insulating inclusions are closely located in a composite, the gradient of the solution to the conductivity problem may become arbitrarily large in the narrow region in between them as the distance between the inclusions tends to zero. This phenomenon is known as field concentration, a central topic in the theory of composite materials. The study of field concentration phenomenon has attracted a lot of attention from various authors in recent decades (see, e.g. \cites{ACKLY,BASL,BLY2,BV,DLY,DYZ24,fukushima2024finiteness,li2024optimalgradientestimatesinsulated,LN,We,Y1} and the survey paper \cite{Kang}).

Let us first describe the mathematical setup of the conductivity problem. Let $n\ge 2$, $\Omega\subset \bR^n$ contain two disjoint open sets $\cD_1$ and $\cD_2$ with $\text{dist}(\cD_1\cup \cD_2, \partial\Omega)>1$. Let $\widetilde{\Omega} := \Omega \setminus \overline{(\cD_1 \cup \cD_2)}$ and $\varepsilon=\text{dist}(\cD_1, \cD_2)$. The conductivity problem can be modeled by the following elliptic equation:
\begin{equation}\label{general_problem}
\begin{cases}
-\mathrm{div}\Big((k \,\chi_{\cD_1\cup \cD_2}+ \chi_{\widetilde{\Omega}})(x)\,D{u}\Big)=0&\mbox{in}~\Omega,\\
u=\varphi(x)&\mbox{on}~\partial\Omega,
\end{cases}
\end{equation}
where $k$ represents the conductivity of the inclusions $\cD_1$ and $\cD_2$, $u$ represents the voltage potential, and $-D u$ stands for the electric field. For the case when $k$ is bounded away from $0$ and $\infty$, it was proved in \cites{BV,LV,LN} that $Du$ is always uniformly bounded independent of $\varepsilon$. However, if $k$ degenerates to either $\infty$ or $0$, the gradient of solutions may blow up when $\varepsilon\to 0$.

In the case when $k=\infty$, Eq.\eqref{general_problem} reduces to the following perfect conductivity problem 
\begin{equation*}
\begin{cases}
-\Delta u =0&\mbox{in}~\widetilde\Omega,\\
u  = U_j &\mbox{on}~\partial \cD_i,\, i = 1,2,\\
\int_{\partial \cD_i} \partial_\nu u \, d\sigma = 0& i=1,2,\\
u=\varphi&\mbox{on}~\partial\Omega,
\end{cases}
\end{equation*}
where $U_j$ is some constant determined by the third line.
When $\cD_1$ and $\cD_2$ are strictly convex sets, the optimal $L^\infty$ bound for $Du$ was shown in \cites{AKLLL,AKL,Y1,Y2,BLY1,BLY2} to be
\begin{equation*}
\begin{cases}
\| D u \|_{L^\infty(\widetilde{\Omega})} \le C\varepsilon^{-1/2} \|\varphi\|_{C^2(\partial \Omega)} &\mbox{when}~n=2,\\
\| D u \|_{L^\infty(\widetilde{\Omega})} \le C|\varepsilon \ln \varepsilon|^{-1} \|\varphi\|_{C^2(\partial \Omega)} &\mbox{when}~n=3,\\
\| D u \|_{L^\infty(\widetilde{\Omega})} \le C\varepsilon^{-1} \|\varphi\|_{C^2(\partial \Omega)} &\mbox{when}~n\ge 4.
\end{cases}
\end{equation*}
However, when
$\cD_1$ and $\cD_2$ have partially flat boundary near their closest points, it was proved in \cite{CLX} that $Du$ remains uniformly bounded independent of $\varepsilon$.  

In the case when $k=0$, Eq.\eqref{general_problem} becomes the following insulated conductivity problem
\begin{equation*}
\begin{cases}
-\Delta u =0&\mbox{in}~\widetilde\Omega,\\
\partial_\nu u  = 0 &\mbox{on}~\partial \cD_i, \,i = 1,2,\\
u=\varphi&\mbox{on}~\partial\Omega.
\end{cases}
\end{equation*}
When $\cD_1$ and $\cD_2$ are strictly convex sets in $\bR^2$, the optimal blow-up rate of $Du$ was shown to be of the order $\varepsilon^{-1/2}$ about two decades ago in \cites{AKL,AKLLL}. However, in higher dimensions $n\ge 3$, the problem becomes much more difficult. The optimal blow-up rate when the inclusions are strictly convex in dimensions $n\ge 3$ was only recently obtained in \cites{DLY,DLY2,li2024optimalgradientestimatesinsulated}, and is related to the principal curvatures of the inclusions. 

In this paper, we study the field concentration between two partially flat insulators. We assume that $\mathcal{D}_{1},\mathcal{D}_{2}\subset\Omega \subset \bR^n$ are axisymmetric with respect to the same axis, and $\cD_1$ and $\cD_2$ have partially flat boundaries near their closest points. We study the equation
\begin{equation}\label{equzero}
\left\{
\begin{aligned}
-\Delta u &=0 \quad \mbox{in }\widetilde{\Omega},\\
\partial_\nu u &= 0 \quad \mbox{on}~\partial\mathcal{D}_{i},~i=1,2,\\
 u &= \varphi \quad \mbox{on } \partial \Omega,
\end{aligned}
\right.
\end{equation}
where $\varphi\in{C}^{1,1}(\partial\Omega)$ is given and $\nu = (\nu_1, \ldots, \nu_n)$ denotes the inward normal vector on $\partial\mathcal{D}_{1} \cup \partial\mathcal{D}_{2}$.

We denote $x = (x', x_n)$, where $x' \in \mathbb{R}^{n-1}$, $B'_r:= \{x' \in \bR^{n-1} : |x'|< r\}$. By choosing a coordinate system properly, we can assume that near the origin, the part of $\partial \mathcal{D}_1$ and $\partial \mathcal{D}_2$, denoted by $\Gamma_+$ and $\Gamma_-$, are respectively the graphs of two $C^{2,\gamma}$ ($\gamma\in (0,1)$) functions in terms of $x'$. That is,
\begin{align*}
\Gamma_+ = \left\{ x_n = \frac{\varepsilon}{2}+h_1(x'),~|x'|<1\right\},\quad \Gamma_- = \left\{ x_n = -\frac{\varepsilon}{2}+h_2(x'),~|x'|<1\right\},
\end{align*}
where $h_1$ and $h_2$ are $C^{2,\gamma}$ radial functions satisfying 
\begin{equation}\label{fg_0}
h_1(x')=h_2(x')=0 \quad \mbox{in}~~B'_{r_0},
\end{equation}
\begin{equation}\label{fg_1}
h_1(x') - h_2(x') = a\,(|x'|-r_0)^2_+ + O \Big( (|x'|-r_0)^{2+\gamma}_+ \Big) \quad \mbox{in}~~B'_{1}
\end{equation}
for some positive constants $a$ and $r_0 < 1/2$. For $x_0 \in \widetilde\Omega$, $0 < r\leq 1$, we denote
\begin{align*}
\Omega_{x_0,r}:=\left\{(x',x_{n})\in \widetilde{\Omega}:~-\frac{\varepsilon}{2}+h_2(x')<x_{n}<\frac{\varepsilon}{2}+h_1(x'),~|x'|<1,~|x'-x_0' |<r\right\},
\end{align*}
and $\Omega_{r}:=\Omega_{0,r}$.
By the maximum principle and classical gradient estimates, the solution $u \in H^1(\widetilde\Omega)$ of \eqref{equzero} satisfies
\begin{equation}
\label{u_C1_outside}
\|u\|_{L^\infty(\widetilde\Omega)} + \| D u\|_{{L^\infty}(\widetilde\Omega \setminus \Omega_{1/2})} \le C\|\varphi\|_{C^{1,1}(\partial \Omega)}.
\end{equation}
Our main result is the uniform boundedness of the gradient in the presence of partially flat insulators under certain conditions on $\varphi$, including the cases of any uniform background fields.

\begin{theorem}\label{main-thm}
Let $h_1$, $h_2$ be $C^{2,\gamma}$ functions satisfying \eqref{fg_0} and \eqref{fg_1} with some $a > 0$, $r_0 \in (0, 1/2)$, and $\gamma\in(0,1)$. Let $n \ge 2$, $\varepsilon \in(0,1/4)$, and $u \in H^1(\widetilde\Omega)$ be the solution of \eqref{equzero} with 
\begin{equation}
    \label{eq17.07}
\varphi= \sum_{k=0}^M\sum_{i=1}^{N(k)} {f}_{k,i}(r,x_n)Y_{k,i}(\xi),
\end{equation}
where $M$ is a finite positive integer, $r = |x'|$, $Y_{k,i}(\xi)$ are the $L^2$-normalized spherical harmonics of degree $k$ on $\mathbb{S}^{n-2}$, $N(k)$ is the number of linearly independent spherical harmonics of degree $k$, and ${f}_{k,i}$'s are $C^{1,1}$ functions. Then there exists a constant $C>0$ depending only on $n$, $M$ $a$, $r_0$, $\gamma$, $\|h_1\|_{C^{2,\gamma}}$, $\|h_2\|_{C^{2,\gamma}}$, and $\text{diam}(\Omega)$, such that
\begin{equation}\label{main-eq}
\|D u\|_{L^\infty(\widetilde\Omega)} \le  C \sum_{k=0}^M\sum_{i=1}^{N(k)} \|{f}_{k,i}\|_{C^{1,1}}.
\end{equation}
\end{theorem}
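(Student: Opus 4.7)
The plan is to exploit the axisymmetry to reduce the $n$-dimensional problem to a family of two-dimensional problems via spherical-harmonic decomposition, and then prove a uniform gradient bound mode-by-mode by using the flat geometry near the contact region.

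Step 1 (Mode reduction). Since $\cD_1\cup\cD_2$ is axisymmetric about the $x_n$-axis and \eqref{equzero} is linear, by uniqueness the solution admits the expansion
\begin{equation*}
u(x)=\sum_{k=0}^M\sum_{i=1}^{N(k)} u_{k,i}(r,x_n)\,Y_{k,i}(\xi),\qquad r=|x'|,\ \xi=x'/|x'|,
\end{equation*}
in which each coefficient $u_{k,i}$ solves the two-dimensional reduced equation
\begin{equation*}
\partial_{x_n}^2 u_{k,i}+\partial_r^2 u_{k,i}+\tfrac{n-2}{r}\partial_r u_{k,i}-\tfrac{k(k+n-3)}{r^2}u_{k,i}=0
\end{equation*}
on the meridian section, with $\partial_\nu u_{k,i}=0$ on the two graphs and Dirichlet data $f_{k,i}$ on the outer boundary. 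Because $M<\infty$ and $N(k)\le C(n,k)$, it suffices to prove, uniformly in $\varepsilon$ and for each fixed $(k,i)$, that $\|Du_{k,i}\|_{L^\infty}\le C\|f_{k,i}\|_{C^{1,1}}$; the full bound then follows by summation, together with \eqref{u_C1_outside} outside $\Omega_{1/2}$.

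Step 2 (Flat core and outer annulus). Fix a mode and write $v=u_{k,i}$. In the flat core $\{|x'|<r_0\}$ the Neumann boundaries $x_n=\pm\varepsilon/2$ are truly flat, so an even reflection in $x_n$ (with periodic continuation) extends $v$ to a solution of the same equation on the full strip $\{-\varepsilon/2<x_n<\varepsilon/2\}\cap\{|x'|<r_0\}$ with no boundary. Rescaling $y_n=x_n/\varepsilon$ produces an equation on a unit-height rectangle whose leading term $\varepsilon^{-2}\partial_{y_n}^2 v$ dominates for small $\varepsilon$; an elementary Caccioppoli/oscillation argument shows that $v$ is essentially $x_n$-independent there, with $\|Dv\|_{L^\infty}$ controlled by $\|v\|_{L^\infty}$ on the vertical sides $r\approx r_0$ plus the data. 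In the outer region $r-r_0\gtrsim \sqrt{\varepsilon}$, the gap $\delta(r)=\varepsilon+a(r-r_0)_+^2+O((r-r_0)_+^{2+\gamma})$ is comparable to $(r-r_0)^2$, so on each dyadic subregion $\{2^{-j-1}\sqrt{\varepsilon}<r-r_0<2^{-j}\sqrt{\varepsilon}\}$ a rescaling $(r-r_0,x_n)\mapsto(r-r_0,x_n)/(r-r_0)$ maps the problem to a domain of unit geometry on which classical $C^{1,\alpha}$ estimates for the reduced equation apply; here the $C^{2,\gamma}$ regularity of $h_j$ is used to bound the error in the rescaled equation.

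Step 3 (The transition zone — the main obstacle). The hard part is the junction $r-r_0\sim\sqrt{\varepsilon}$ where the reflection of Step 2 breaks down and the rescaling of Step 2 degenerates. I would handle it by a blow-up/compactness argument: after the anisotropic rescaling $r-r_0=\sqrt{\varepsilon}\,\rho$, $x_n=\varepsilon\,\eta$, the rescaled coefficient $v^\varepsilon$ satisfies (up to lower-order errors vanishing with $\varepsilon$) the limiting two-dimensional Neumann problem on the fixed domain $\{\eta\in(-1/2-\frac{a}{2}\rho_+^2,\ 1/2+\frac{a}{2}\rho_+^2)\}$, which is a half-plane joined to a quadratic cusp. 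A uniform Lipschitz bound for this limit problem (obtainable via standard boundary regularity for the Laplacian on a Lipschitz domain together with the fact that the angle at the junction point is $\pi/2$) propagates back to $v$, completing the gradient estimate in the transition zone. If the compactness route proves insufficient, an alternative is a direct De~Giorgi-type iteration across annular shells in $\rho$, using the $k(k+n-3)/r^2$ potential (nonnegative for $k\ge 1$) as an additional coercive term and the $\varepsilon$-thickness to close the iteration for $k=0$; either route reduces the problem to an $\varepsilon$-independent planar boundary regularity statement.

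Combining Steps 1--3 and summing over the finitely many modes $(k,i)$ yields \eqref{main-eq}, with the constant depending on the claimed parameters via the mode count $M$, the geometric constants $a,r_0,\gamma$, and the $C^{2,\gamma}$-norms of $h_j$.
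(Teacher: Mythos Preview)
Your Step~1 (mode reduction via spherical harmonics and linearity) is correct and is exactly what the paper does; the flat-core part of Step~2 (even reflection plus periodic extension on $\{|x'|<r_0\}$) also matches the paper's Lemma establishing $|Dv|\le C/r_0$ there.

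The genuine gap is in Step~3 (and, relatedly, the outer part of Step~2). Your anisotropic rescaling $r-r_0=\sqrt{\varepsilon}\,\rho$, $x_n=\varepsilon\,\eta$ turns $\partial_{x_n}^2+\partial_r^2$ into $\varepsilon^{-2}\partial_\eta^2+\varepsilon^{-1}\partial_\rho^2$, so the limit equation is the degenerate $\partial_\eta^2 v=0$; combined with the Neumann condition this only tells you $v$ is $\eta$-independent and says nothing about $\partial_\rho v$, which is precisely the quantity you need to bound. So the blow-up does not yield a nondegenerate limiting problem on which a Liouville/Lipschitz statement would close the argument. (Also, the boundary of your limit domain is $C^{1,1}$ at $\rho=0$, not a corner of angle $\pi/2$.) Likewise, the dyadic rescaling in the outer region only reproduces the known local estimate $|Du(x)|\le C(\varepsilon+(|x'|-r_0)_+^2)^{-1/2}\osc u$; without an independent oscillation bound it does not give $|Du|\le C$.

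The paper obtains the missing tangential control by a completely different mechanism. After a flattening change of variables, it takes the $y_n$-average $\bar v(y')$, which for a single mode reads $\bar v=V(r)Y_{k,i}(\xi)$, and shows that $V$ solves a second-order ODE $V''+b(r)V'-\tfrac{k(k+n-3)}{r^2}V=H$ with an explicit forcing $H=A'+B$ estimated via \eqref{estimate_F}. For $k\ge 1$ the paper constructs (Lemma~\ref{ODE_lemma}) a comparison solution $h$ of the homogeneous ODE with $r^k<h<1$ and $|h'|\le C$, performs reduction of order $w=V/h$, and uses the integrating factor $e^{\int b}$ to bound $|V'(r)|\le C(\varepsilon+(r-r_0)_+^2)^{(\gamma-1)/2}$. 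Feeding this back into the oscillation estimate and iterating (each step improves the exponent by $\gamma/2$) yields $|Du|\le C$ after finitely many passes. This bootstrap is the core of the proof and has no analogue in your sketch; the compactness/De~Giorgi alternatives you propose would need an entirely new ingredient to recover the tangential derivative bound that the ODE analysis provides.
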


\begin{remark}
Any $C^{1,1}$ function $\varphi$ can be expressed as an infinite sum of spherical harmonics:
$$
\varphi= \sum_{k=0}^\infty \sum_{i=1}^{N(k)} {f}_{k,i}(r,x_n)Y_{k,i}(\xi).
$$
In \eqref{eq17.07}, it is assumed that the boundary data $\varphi$ admits a spherical harmonics decomposition with only finitely many terms.
A physically significant example of boundary data is the uniform background field $\varphi = x_j$. Note that in the insulated conductivity problem with strictly convex inclusions, the optimal blow-up rate was achieved when $\varphi = x_j$ for $j = 1, 2, \ldots, n-1$ in \cites{DLY}, while for the perfect conductivity problem with strictly convex inclusions, the optimal blow-up rate was attained when $\varphi = x_n$ in \cites{BLY1}.
\end{remark}

\subsection{Difficulty and our strategy}\label{sec1.1}

We briefly explain the main difficulty of the problem in comparison to the setting of strictly convex insulators. An important ingredient in \cite{DLY} is the estimate \eqref{Du-bound} with $r_0=0$. The authors used the orthogonality of the spherical harmonics to obtain an optimal control of $\osc_{\Omega_{x,\eta}}u$ that appeared on the right-hand side of \eqref{Du-bound}. That argument is not applicable to the case when $r_0>0$, due to the fact that $|x'|$ and $\eta:= \frac{1}{4} (\varepsilon + (|x'|-r_0)_+^2)^{1/2}$ are not always comparable when $r_0 > 0$. Therefore, we rely on more refined and delicate estimates for each mode in the spherical harmonics decomposition \eqref{sph-decom}.

By the linearity of equation \eqref{equzero}, it suffices to prove the uniform boundedness of $Du$ in Theorem \ref{main-thm} for $\varphi$ consisting of a single mode. A key ingredient is Lemma \ref{lem:single_decom}, which shows that if the boundary data $\varphi$ contains only a single mode, then so does the solution $u$. We treat separately the cases when $u$ contains a single nonzero mode and when $u$ contains a single zero mode, and prove Theorem \ref{main-thm} in Sections \ref{sec3} and \ref{sec4}, respectively.

\section{Preliminary}
First, by following the proof of \cite{LY2}*{Theorem 1.1} (see also the proof of \cite{BLY2}*{Theorem 1.2}), one can use a similar flattening and extension argument 
to obtain 
\begin{lemma}
    Let $u \in H^1({\widetilde\Omega})$ be a solution of \eqref{equzero}. Then for $x\in \Omega_1$, and $\eta=(\varepsilon + (|x'|-r_0)_+^2)^{1/2}/4$, we have
\begin{equation}\label{Du-bound}
    |D u(x)| \le C \big(\varepsilon + (|x'|-r_0)_+^2 \big)^{-1/2}\underset{\Omega_{x,\eta}}{\osc}~u,
\end{equation}
where $C > 0$ depends only on $n$, $a$, $\gamma$, $\|h_1\|_{C^{2,\gamma}}$, and $\|h_2\|_{C^{2,\gamma}}$.
\end{lemma}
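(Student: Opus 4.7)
The plan is to reduce \eqref{Du-bound} to an interior gradient estimate via a flattening-and-extension argument in the spirit of \cite{LY2}*{Theorem 1.1} and \cite{BLY2}*{Theorem 1.2}. Fix $x \in \Omega_1$, set $\delta := \varepsilon + (|x'|-r_0)_+^2$ so that $\eta = \delta^{1/2}/4$, and write $C(z') := \tfrac{1}{2}(h_1+h_2)(z')$ and $H(z') := \tfrac{\varepsilon}{2} + \tfrac{1}{2}(h_1-h_2)(z')$. From \eqref{fg_0}--\eqref{fg_1} combined with $h_1, h_2 \in C^{2,\gamma}$, we obtain $H(z') \asymp \delta \asymp \eta^2$ on the horizontal ball $B'_{x', \eta}$ together with $|\nabla C(z')|, |\nabla H(z')| \lesssim \eta$ and corresponding $C^\gamma$ control.

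First I would flatten and rescale via the map $\Phi : Q \to \Omega_{x, \eta}$, where $Q := B'_1 \times (-1, 1)$ and
\[
\Phi(y) := \bigl( x' + \eta y',\ C(x' + \eta y') + H(x' + \eta y')\, y_n \bigr),
\]
which sends the flat faces $\{y_n = \pm 1\}$ of $Q$ onto the insulating boundaries $\Gamma_\pm$. A Jacobian computation shows that $\tilde u(y) := u(\Phi(y))$ satisfies a divergence-form equation $\partial_i(\tilde a^{ij}\partial_j \tilde u) = 0$ on $Q$ with the conormal condition $\tilde a^{nj}\partial_j \tilde u = 0$ on $\{y_n = \pm 1\}$. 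After normalizing by the common factor $\eta^{n-1}$, the resulting coefficients $\bar a^{ij}$ are $C^\gamma$-controlled uniformly in $\varepsilon$ and $x$, and satisfy $\bar a^{kl} \asymp \delta_{kl}$ for $k, l < n$, $|\bar a^{nl}| \lesssim 1$ for $l < n$, while $\bar a^{nn} \asymp \eta^{-2}$; the $(n,n)$-entry dominates, reflecting the inherent anisotropy of the thin neck.

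Next I would use the conormal condition to perform even reflection across $\{y_n = \pm 1\}$, extending $\tilde u$ to a weak solution on the enlarged cylinder $B'_1 \times (-3, 3)$ with piecewise $C^\gamma$ coefficients retaining the same ellipticity structure. Interior estimates for the reflected problem at the point $y^* := (0, (x_n - C(x'))/H(x')) \in Q$ corresponding to $x$, combined with the quantitative observation that the dominance $\bar a^{nn} \sim \eta^{-2}$ together with the (now symmetric) Neumann condition forces $\tilde u$ to be nearly $y_n$-independent, yield
\[
|\partial_{y_k}\tilde u(y^*)| \le C \underset{Q}{\osc}\,\tilde u \quad (k < n), \qquad |\partial_{y_n}\tilde u(y^*)| \le C \eta^2 \underset{Q}{\osc}\,\tilde u,
\]
with $C$ independent of $\varepsilon$ and $x$. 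Applying the chain rule for $u = \tilde u \circ \Phi^{-1}$ and using $H \asymp \eta^2$ with $|\nabla C|, |\nabla H| \lesssim \eta$, one finds $|Du(x)| \le C \eta^{-1} \underset{\Omega_{x, \eta}}{\osc}\, u$, which is \eqref{Du-bound}.

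The main obstacle is the degeneracy of the rescaled equation: the $(n,n)$-coefficient of order $\eta^{-2}$ means that standard uniformly elliptic interior estimates cannot be applied naively at unit scale. The critical compensation comes from the Neumann condition on the flattened faces, which supplies exactly the gain $|\partial_{y_n}\tilde u| \lesssim \eta^2\,\osc\,\tilde u$ needed to cancel the dangerous $H^{-1} \sim \eta^{-2}$ factor that would otherwise arise in the chain rule for $\partial_{x_n}u$. Making this compensation quantitative---either through a perturbation/Fourier decomposition in the $y_n$ variable, or through an energy estimate exploiting the dominant $\bar a^{nn}$---is the technical heart of the argument and is exactly the ingredient furnished by the proofs of \cite{LY2}*{Theorem 1.1} and \cite{BLY2}*{Theorem 1.2}.
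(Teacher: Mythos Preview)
Your plan follows the same flattening-and-reflection strategy that the paper simply cites from \cite{LY2} and \cite{BLY2}, so the overall route matches. The one substantive issue is that your anisotropic rescaling to the unit cube $Q$ is what manufactures the ``main obstacle'': stretching a strip of height $\sim\eta^2$ to unit height forces $\bar a^{nn}\sim\eta^{-2}$ and destroys uniform ellipticity. The argument in \cite{LY2} and \cite{BLY2} avoids this by flattening only the curved faces (a local $C^{1,\gamma}$ change of variables whose Jacobian is an $O(\eta)$ perturbation of the identity, since $|\nabla h_i|\lesssim(|x'|-r_0)_+\lesssim\eta$ on $B'_{x',\eta}$), then performing the even reflection across the flat Neumann faces and periodic extension in the vertical variable. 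The resulting divergence-form equation is \emph{uniformly} elliptic with $C^\gamma$ coefficients on a set containing a ball of radius $\sim\eta$ about the image of $x$, and the classical interior gradient estimate at that scale yields \eqref{Du-bound} directly, the oscillation over the ball collapsing to $\osc_{\Omega_{x,\eta}}u$ by periodicity. In your coordinates one reaches the same conclusion by the further substitution $z_n=\eta\,y_n$, which restores uniform ellipticity on $B'_1\times(-\eta,\eta)$ and, after reflection, gives $|\partial_{y'}\tilde u|\le C\,\osc_Q\tilde u$ and $|\partial_{y_n}\tilde u|\le C\eta\,\osc_Q\tilde u$ from the interior estimate at unit scale. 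This last bound is weaker than the $C\eta^2$ you assert, but $C\eta$ is exactly what the chain rule requires (since $H^{-1}\cdot\eta\asymp\eta^{-1}$); the sharper $\eta^2$ gain is neither what \cite{LY2}, \cite{BLY2} prove nor what is needed here.
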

Without loss of generality, we assume that $a=1$. We perform a change of variables by setting
\begin{equation}\label{x_to_y}
\left\{
\begin{aligned}
y' &= x' ,\\
y_n &=   2 \varepsilon \left( \frac{x_n - h_2(x') + \varepsilon/2}{\varepsilon + h_1(x') - h_2(x')} - \frac{1}{2} \right) ,
\end{aligned}\right.
\quad \forall (x',x_n) \in \Omega_{1}.
\end{equation}
This change of variables maps the domain $\Omega_{1}$ to a cylinder of height $2\varepsilon$, denoted by $C_{0,1}$, where
\begin{equation}\label{C_x_s}
C_{x,s}:=\{ y = (y',y_n) \in \bR^n:~  |y'-x'| < s,\,  |y_n| < \varepsilon\}.
\end{equation}
for $s,t > 0$. Moreover, $\det (D_x y)=2\varepsilon(\varepsilon + h_1(x') - h_2(x'))^{-1}$.
Let $u$ be the solution and let $v(y) = u(x)$. Then $v$ satisfies
\begin{equation*}
\left\{
\begin{aligned}
-D_i(a^{ij}(y) D_j v(y)) &=0 \quad \mbox{in } C_{0,1},\\
a^{nj}(y) D_j v(y) &= 0 \quad \mbox{on } \{(y',y_n):~|y'|<1,\; y_n=\pm \varepsilon\},
\end{aligned}
\right.
\end{equation*}
with $\|v\|_{L^\infty(C_{0,1})} \le C$,
where the coefficient matrix $(a^{ij}(y))$ is given by
\begin{align}\label{a_ij_formula}
&(a^{ij}(y)) = \frac{ 2 \varepsilon(\partial_x y)(\partial_x y)^t}{\det (\partial_x y)} =\nonumber\\
& \begin{pmatrix}
\varepsilon + (|y'|-r_0)_+^2 &0 &\cdots &0 &a^{1n}\\
0 &\varepsilon + (|y'|-r_0)_+^2 &\cdots &0 &a^{2n}\\
\vdots &\vdots &\ddots &\vdots &\vdots\\
0 &0 &\cdots &\varepsilon + (|y'|-r_0)_+^2 &a^{n-1,n}\\
a^{n1} &a^{n2} &\cdots &a^{n,n-1} &  \frac{4\varepsilon^2 + \sum_{i=1}^{n-1}|a^{in}|^2}{\varepsilon + h_1(y') - h_2(y')}
\end{pmatrix}\nonumber\\
&+
\begin{pmatrix}
e^1 &0 &\cdots &0 &0\\
0 &e^2 &\cdots &0 &0\\
\vdots &\vdots &\ddots &\vdots &\vdots\\
0 &0 &\cdots & e^{n-1} &0\\
0 &0 &\cdots & 0 &0
\end{pmatrix},
\end{align}
and
\begin{equation}\label{a_ij_estimate}
\begin{aligned}
a^{ni} = a^{in} &= -2\varepsilon\, D_i h_2(y') - (y_n + \varepsilon)\,D_i(h_1(y') - h_2(y')) = O(1)\, \varepsilon \,(|y'|-r_0)_+;\\
e^{i} &= h_1(y')-h_2(y')-(|y'|-r_0)_+^2 = O\Big( (|y'|-r_0)_+^{2+\gamma} \Big)
\end{aligned}
\end{equation}
for $i = 1, \ldots , n-1$.

\begin{lemma}
Let $v$ be defined as above. Then we have
\begin{equation}\label{Dv_uniform_bound}
|Dv(y)| \le \frac{C}{r_0} , \quad y \in C_{0,3r_0/4},
\end{equation}
\begin{equation}\label{D'v-bound}
|D_{y'} v(y)| \le C \big(\varepsilon + (|y'|-r_0)_+^2 \big)^{-1/2}, \quad y \in C_{0,1},
\end{equation}
and
\begin{equation}\label{Dnv-bound}
|D_{n} v(y)| \le C\,\varepsilon^{-1}\big(\varepsilon+(|y'|-r_0)_+^2\big),  \quad y \in C_{0,1},  
\end{equation}
where $C>0$ depends only on $n$, $\gamma$, $\|h_1\|_{C^{2,\gamma}}$, $\|h_2\|_{C^{2,\gamma}}$, and $\|\varphi\|_{C^{1,1}}$.
\end{lemma}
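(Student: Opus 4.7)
The plan is to obtain each of \eqref{Dv_uniform_bound}, \eqref{D'v-bound}, and \eqref{Dnv-bound} by combining the gradient bound \eqref{Du-bound} with the chain rule for the change of variables \eqref{x_to_y}; beyond this, the sharper bound \eqref{Dv_uniform_bound} needs a reflection argument in the flat region, and \eqref{Dnv-bound}---the main obstacle---needs a maximum-principle argument establishing the uniform control $|\partial_{x_n} u| \le C$ on all of $\Omega_1$.

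I would begin by inverting \eqref{x_to_y} to get $x_n = h_2(y') - \varepsilon/2 + \tfrac{1}{2\varepsilon}(y_n + \varepsilon)(\varepsilon + h_1(y') - h_2(y'))$, which yields
\[
\partial_{y_i} x_n = \partial_{y_i} h_2(y') + \tfrac{y_n + \varepsilon}{2\varepsilon}\,\partial_{y_i}(h_1 - h_2)(y'),\qquad \partial_{y_n} x_n = \tfrac{\varepsilon + h_1(y') - h_2(y')}{2\varepsilon}.
\]
Because $h_1, h_2 \in C^{2,\gamma}$ both vanish on $\overline{B'_{r_0}}$, the quantities $|\nabla h_i|$ and $|\nabla (h_1 - h_2)|$ are bounded by $C(|y'|-r_0)_+$, so $|\partial_{y_i} x_n| \le C(|y'|-r_0)_+$, while \eqref{fg_1} gives $\partial_{y_n} x_n \le C\varepsilon^{-1}(\varepsilon + (|y'|-r_0)_+^2)$. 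Feeding these into $\partial_{y_i} v = \partial_{x_i} u + (\partial_{y_i} x_n)\,\partial_{x_n} u$ and invoking \eqref{Du-bound} (with $\osc u \le 2\|u\|_{L^\infty} \le C$) produces \eqref{D'v-bound} after absorbing the factor $1 + (|y'|-r_0)_+$ into the constant.

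For \eqref{Dv_uniform_bound}, I would use that on $|x'| \le r_0$ the transformation collapses to $y' = x'$, $y_n = 2 x_n$, and $u$ is harmonic in the slab $|x_n| < \varepsilon/2$, $|x'| < r_0$, with Neumann-zero data on the two flat faces $x_n = \pm\varepsilon/2$. Successive even reflections across these faces extend $u$ to a harmonic function on the full cylinder $B'_{r_0} \times \mathbb{R}$ with $L^\infty$-norm preserved, and the standard interior gradient estimate then gives $|Du(x)| \le C\|u\|_{L^\infty}/r_0 \le C/r_0$ for $|x'| \le 3r_0/4$. The chain rule (which is trivial in the flat region) carries this to $|Dv(y)| \le C/r_0$ on $C_{0, 3r_0/4}$.

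The main obstacle is \eqref{Dnv-bound}: the naive use of \eqref{Du-bound} in $\partial_{y_n} v = (\partial_{y_n} x_n)\,\partial_{x_n} u$ gives only $|\partial_{y_n} v| \le C\varepsilon^{-1}(\varepsilon + (|y'|-r_0)_+^2)^{1/2}$, which is weaker than claimed by a factor of $(\varepsilon + (|y'|-r_0)_+^2)^{1/2}$. To close the gap I would prove the uniform bound $|\partial_{x_n} u| \le C$ on $\Omega_1$ and multiply it by the pointwise estimate on $\partial_{y_n} x_n$ above. Since $u$ is harmonic, $w := \partial_{x_n} u$ is harmonic on $\Omega_1$, so by the maximum principle it suffices to control $w$ on $\partial \Omega_1$. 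On the lateral piece $\{|x'| = 1\} \cap \partial\Omega_1$, which lies in $\widetilde\Omega \setminus \Omega_{1/2}$, estimate \eqref{u_C1_outside} yields $|w| \le |Du| \le C$. On the top face $\Gamma_+$, the inward normal is $\nu = (1+|\nabla h_1|^2)^{-1/2}(-\nabla h_1, 1)$, and the Neumann condition $\partial_\nu u = 0$ rearranges to $\partial_{x_n} u = \nabla h_1 \cdot D_{x'} u$; combining $|\nabla h_1(x')| \le C(|x'|-r_0)_+$ with $|D_{x'} u| \le C(\varepsilon + (|x'|-r_0)_+^2)^{-1/2}$ from \eqref{Du-bound} gives $|w| \le C$ on $\Gamma_+$, and the identical argument works on $\Gamma_-$. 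The maximum principle then transfers $|w| \le C$ to all of $\Omega_1$, and \eqref{Dnv-bound} follows.
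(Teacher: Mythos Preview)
Your proposal is correct and follows essentially the same approach as the paper. For \eqref{D'v-bound} and \eqref{Dnv-bound} your argument is line-for-line the paper's: chain rule plus \eqref{Du-bound} for the former, and for the latter the key observation that $\partial_{x_n}u$ is harmonic, bounded on the lateral boundary by \eqref{u_C1_outside}, and bounded on $\Gamma_\pm$ via the Neumann condition $\partial_{x_n}u=\nabla h_j\cdot D_{x'}u$ combined with $|\nabla h_j|\le C(|x'|-r_0)_+$ and \eqref{Du-bound}. For \eqref{Dv_uniform_bound} the only cosmetic difference is that you reflect $u$ (which is genuinely harmonic in the slab since the transformation reduces to $y_n=2x_n$ there), whereas the paper reflects $v$, which satisfies the constant-coefficient equation $D_i(c^{ij}D_jv)=0$ with $c^{nn}=4$, $c^{ii}=1$; the two are equivalent under the rescaling $y_n=2x_n$.
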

\begin{proof}
First, we prove \eqref{Dv_uniform_bound}. For $y\in C_{0,r_0}$, by \eqref{fg_0}, \eqref{a_ij_formula}, and \eqref{a_ij_estimate}, we know that $(a^{ij}(y))$ is a diagonal matrix with $a^{nn}(y)=4\varepsilon$, $a^{ii}(y)=\varepsilon$ for $i=1,2,\ldots,n-1$, and $a^{ij}(y)=0$ for $i\neq j$. 
By linearity, $v$ satisfies \begin{equation*}
\left\{
\begin{aligned}
-D_i(c^{ij} D_j v(y)) &=0 \quad \mbox{in } C_{0,r_0},\\
c^{nj} D_j v(y) &= 0 \quad \mbox{on } \{(y',y_n)\in \mathbb{R}^n: ~|y'|<r_0,\; y_n=\pm \varepsilon\},
\end{aligned}
\right.
\end{equation*}
where $(c^{ij})$ is the constant diagonal matrix with $c^{nn}=4$, $c^{ii}=1$ for $i=1,2,\ldots,n-1$, and $c^{ij}=0$ for $i\neq j$. 
We now extend $v$ to the whole cylinder $Q_{r_0}:=\{y=(y', y_n)\in \mathbb{R}^n: \,|y'|<r_0\}$.
We take the even extension of $v$ with respect to ${y}_n = \varepsilon$ and then take the periodic extension in the $y_n$ axis, so that the period is equal to $4\varepsilon$. We still denote the function by $v$ after the extension. Then because of the conormal boundary conditions, $v$ satisfies
\begin{equation*}
-D_i(c^{ij} D_j v(y)) =0 \quad  \text{in} \; Q_{r_0}.
\end{equation*}
Then by classical gradient estimates for elliptic equations, we have
\begin{equation*}
    \|Dv\|_{L^\infty(Q_{3r_0/4})} \le \frac{C}{r_0}\,\|v\|_{L^\infty(Q_{r_0})},
\end{equation*}
and thus \eqref{Dv_uniform_bound} holds.

Next, by the change of variables in \eqref{x_to_y}, \eqref{D'v-bound} follows directly from \eqref{Du-bound} as
$$
\left| \frac{\partial x_i}{\partial y_j} \right| \le C \quad \mbox{for}~ i = 1,\ldots,n,~ j=1,\ldots, n-1.
$$
Since $\frac{\partial u}{\partial \nu} = 0$ on $\Gamma_+$ and $\Gamma_-$, we have, by
\eqref{fg_0}, \eqref{fg_1}, and \eqref{Du-bound} that
$$|D_n u(x)| \le  C\sum_{i = 1}^{n-1}(|x'|-r_0)_+ |D_i u|  \le C, \quad \forall x \in \Gamma_+ \cup \Gamma_-.$$
By the harmonicity of $D_n u$, the estimate \eqref{u_C1_outside}, and the maximum principle,
\begin{equation*}
|D_n u| \le C \quad \mbox{in}~~\Omega_{1},
\end{equation*}
and consequently, \eqref{Dnv-bound} holds.
The lemma is proved.
\end{proof}

We define
\begin{equation}
\label{v_bar_def}
\bar{v}(y') := \fint_{-\varepsilon}^\varepsilon v(y',y_n)\, dy_n.
\end{equation}
Then $\bar{v}$ satisfies
\begin{equation}\label{equation_v_bar}
\dv\big((\varepsilon+(|y'|-r_0)_+^2)D \bar v\big)= - \dv F  \quad\text{in}\,\,B_{1}\subset \bR^{n-1},
\end{equation}
with $\| \bar{v} \|_{L^\infty(B_{1})} \le C$, where
$$
F^i = \overline{a^{in}D_n v} + e^i D_i \bar v,
$$
$\overline{a^{in}\partial_n v}$ is the average of $a^{in}\partial_n v$ with respect to $y_n$ in $(-\varepsilon,\varepsilon)$. By \eqref{a_ij_estimate}, \eqref{D'v-bound}, and \eqref{Dnv-bound}, we know that
\begin{equation}\label{estimate_F}
\begin{aligned}
|F| &\le C\big((|y'|-r_0)_+(\varepsilon + (|y'|-r_0)_+^2) + (|y'|-r_0)_+^{2+\gamma}(\varepsilon+(|y'|-r_0)_+^2)^{-1/2}\big)\\
&\le C\varepsilon\,(|y'|-r_0)_+ +C\,(|y'|-r_0)_+^{1+\gamma},
\end{aligned}
\end{equation}
where $C>0$ depends only on $n$, $\gamma$, $\|h_1\|_{C^{2,\gamma}}$, $\|h_2\|_{C^{2,\gamma}}$, and $\|\varphi\|_{C^{1,1}}$.

Next, we write $x' \in \bR^{n-1}$ into the polar coordinate $(r,\xi) \in (0,\infty) \times \bS^{n-2}$, and take the spherical harmonic decomposition
\begin{equation}\label{sph-decom}
u(x',x_n) = \sum_{k=0}^\infty \sum_{i=1}^{N(k)} U_{k,i}(r,x_n)Y_{k,i}(\xi),
\end{equation}
where $\{Y_{k,i}\}_{i=1}^{N(k)}$ are the $k$-th degree normalized spherical harmonic functions on $\bS^{n-2}$ and 
\begin{equation}\label{def:Uki}
U_{k,i}(r,x_n)=\int_{\bS^{n-2}}u(r,\xi,x_n)Y_{k,i}(\xi)\,d\xi.
\end{equation}
Note that $\{Y_{k,i}\}$ is a basis for $L^2(\bS^{n-2})$.
Let $$\hat{\Omega}:=\{(r,x_n)\in \bR_+\times \bR:\,(r,0,\ldots,0,x_n)\in\widetilde\Omega\},$$  $$\hat\Upsilon_i:=\{(r,x_n)\in \bR_+\times \bR:\,(r,0,\ldots,0,x_n)\in\partial{\mathcal{D}_i}\}, \quad i=1,2,$$
and $\hat{\nu}$ denote the outward normal vector on $\hat\Omega$.
Recall that the $k$-th eigenvalue of $-\Delta_{\bS^{n-2}}$ is $k(k+n-3)$. Since $u$ is harmonic in $\widetilde{\Omega}$, by the separation of variables, we know that 
\begin{equation}\label{separation_pde}
    \Big(\frac{\partial^2}{\partial x_n^2}+\frac{\partial^2}{\partial r^2}+\frac{n-2}{r}\frac{\partial}{\partial r}-\frac{k(k+n-3)}{r^2}\Big)U_{k,i}=0 \quad \mbox{in } \hat{\Omega}
\end{equation}
holds for any $k\ge 0$ and $1\le i\le N(k)$.
Moreover, by \eqref{def:Uki}, $U_{k,i}$ also satisfies the Neumann boundary condition 
\begin{equation}\label{bcn:Uki}
    \partial_{\hat\nu} U_{k,i}=0 \quad \mbox{on }\hat\Upsilon_1\cup\hat\Upsilon_2.
\end{equation}
Now we prove that when $\varphi=f(r,x_n)Y_{k,i}(\xi)$, the solution $u$ to \eqref{equzero} has a single mode in the spherical harmonic decomposition \eqref{sph-decom}. More precisely, we have the following lemma.
\begin{lemma}\label{lem:single_decom}
    Let $u\in H^1(\widetilde\Omega)$ be the solution to \eqref{equzero}. If $\varphi=\hat{f}(r,x_n)\,Y_{k,i}(\xi)$, where $\hat{f}$ is a $C^{1,1}$ function, $k\ge 0$, and $1\le i\le N(k)$, then for any $x\in \widetilde\Omega$, we have
$$
u(x',x_n)=U_{k,i}(r,x_n)Y_{k,i}(\xi),
$$
where $Y_{k,i}$ and $U_{k,i}$ are defined as in \eqref{def:Uki}.
\end{lemma}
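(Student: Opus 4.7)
The plan is to exhibit $w(x) := U_{k,i}(r,x_n)\,Y_{k,i}(\xi)$ (with $r = |x'|$ and $\xi = x'/|x'|$) as a solution of the same boundary value problem \eqref{equzero} as $u$, and then invoke $H^1$-uniqueness for the mixed Dirichlet/Neumann problem to conclude $u \equiv w$. This reduces the argument to three verifications: harmonicity of $w$ in $\widetilde\Omega$, the Neumann condition $\partial_\nu w = 0$ on $\partial\cD_1 \cup \partial\cD_2$, and the Dirichlet condition $w|_{\partial\Omega} = \varphi$.

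For harmonicity, I would use the polar form
\[
\Delta = \partial_{x_n}^2 + \partial_r^2 + \tfrac{n-2}{r}\partial_r + \tfrac{1}{r^2}\Delta_{\bS^{n-2}}
\]
combined with the eigenvalue identity $-\Delta_{\bS^{n-2}}Y_{k,i} = k(k+n-3)Y_{k,i}$ and the reduced equation \eqref{separation_pde} to see $\Delta w = 0$ in $\widetilde\Omega$. For the Neumann condition, axisymmetry of $\cD_i$ implies that the inward normal at $(r\xi, x_n) \in \partial\cD_i$ factors as $\nu = (\hat\nu_r\,\xi, \hat\nu_n)$, where $(\hat\nu_r, \hat\nu_n)$ is the inward normal to $\hat\Upsilon_i$ at $(r,x_n)$; therefore $\partial_\nu w = Y_{k,i}(\xi)\,\partial_{\hat\nu}U_{k,i}(r,x_n) = 0$ by \eqref{bcn:Uki}. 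For the Dirichlet condition, the axisymmetric structure of $\partial\Omega$ (implicit in the single-mode form of $\varphi$) together with the $L^2$-orthonormality of $\{Y_{k,i}\}$ yields, at each $(r,x_n)$ with $(r\xi,x_n)\in\partial\Omega$,
\[
U_{k,i}(r,x_n) = \int_{\bS^{n-2}} \hat f(r,x_n)\,Y_{k,i}(\xi)^2\, d\xi = \hat f(r,x_n),
\]
so that $w = \hat f\,Y_{k,i} = \varphi$ on $\partial\Omega$.

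Finally, both $u$ and $w$ lie in $H^1(\widetilde\Omega)$; for $w$ this follows from Parseval's identity applied to the full spherical-harmonic expansion of $u$, which in particular gives $\|w\|_{H^1} \le \|u\|_{H^1}$. Since $u$ and $w$ solve the same mixed boundary value problem, the difference $u - w$ is harmonic in $\widetilde\Omega$ with zero Dirichlet trace on $\partial\Omega$ and vanishing conormal trace on $\partial\cD_1 \cup \partial\cD_2$, and the standard energy identity forces $u \equiv w$. The main technical point I anticipate is controlling $w$ near the axis $r = 0$, where the polar coordinates degenerate; this is resolved by observing that $U_{k,i}(0, x_n) = u(0, x_n)\int_{\bS^{n-2}} Y_{k,i}\, d\xi = 0$ whenever $k \ge 1$ (since $Y_{k,i}$ is orthogonal to the constant $Y_{0,1}$), so no singularity develops along the axis and $w$ lies in $H^1$ across $\{r=0\}$.
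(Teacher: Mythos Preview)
Your approach is correct and genuinely different from the paper's. The paper argues by elimination: for each unwanted pair $(l,j)\neq(k,i)$ it shows that $U_{l,j}$ satisfies the reduced two-dimensional equation \eqref{separation_pde} in $\hat\Omega$ with zero Dirichlet data on $\partial\hat\Omega\setminus(\hat\Upsilon_1\cup\hat\Upsilon_2)$ (the portion $\{r=0\}$ requiring a separate argument via the spherical average $\tilde u$), and then kills $U_{l,j}$ by the maximum principle. You instead build the single-mode function $w=U_{k,i}Y_{k,i}$ directly, verify it solves the same mixed problem, and invoke $H^1$-uniqueness. Your route is more symmetric and avoids the case split on $k$ in the paper; the paper's route avoids having to argue about regularity of $w$ across the axis.

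That last point is where your write-up is slightly thin. Observing $U_{k,i}(0,x_n)=0$ for $k\ge1$ tells you $w$ is continuous across $\{r=0\}$, but it does not by itself give $\Delta w=0$ there; the polar computation only yields harmonicity on $\widetilde\Omega\setminus\{r=0\}$. You should either (i) invoke the removable-singularity theorem for bounded harmonic functions across a set of codimension $n-1\ge2$ (and handle $n=2$ by the trivial even/odd reflection), or (ii) use your Parseval bound $w\in H^1$ together with a capacity/cutoff argument to pass from weak harmonicity off the axis to weak harmonicity on all of $\widetilde\Omega$. Also note that the case $k=0$ is not covered by your vanishing-on-the-axis remark; there the cleanest fix is exactly the paper's observation that $Y_{0,1}U_{0,1}$ is the spherical average of $u$ and hence harmonic by the mean-value property. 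With one of these patches in place your argument is complete.
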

\begin{remark}
    Note that Lemma \ref{lem:single_decom} in particular implies that
\begin{equation*}
 u(x',x_n) =\left\{
 \begin{aligned}
     U_{1,1}(r,x_n)Y_{1,1}(\xi)&, &&\mbox{if } \varphi = x_1,\\
     U_{0,1}(r,x_n)&, && \mbox{if } \varphi = x_n.
 \end{aligned}
 \right.
\end{equation*}
\end{remark} 
\begin{proof}[Proof of Lemma \ref{lem:single_decom}]
    By \eqref{sph-decom}, it suffices to show that $U_{l,j}\equiv 0$ for any pair $\{l,j\}\neq \{k,i\}$. By \eqref{def:Uki} and the orthogonality of spherical harmonics, we know that
\begin{equation}\label{bcd:Uki}
    U_{l,j}=0 \quad \mbox{on } \partial\hat{\Omega}\setminus(\hat\Upsilon_1\cup \hat\Upsilon_2\cup\{r=0\}).
\end{equation}   
We claim that
\begin{equation}\label{Ulj-d}
    U_{l,j}=0 \quad \mbox{on } \partial \hat\Omega\cap \{r=0\}.
\end{equation}
We prove the claim by considering two different cases $k\ge 1$ and $k=0$. 

For the case when $k\ge 1$, we define 
    $$
\tilde{u}(x)=\left\{
\begin{aligned}
   &\fint_{\bS^{n-2}}u(|x'|,\xi,x_n)\,d\xi, && \mbox{if } x\in \widetilde\Omega, \;x'\neq 0,\\
   &u(0,x_n), && \mbox{if } x\in \widetilde\Omega, \;x'= 0.
\end{aligned}
\right.
    $$
Then $\tilde{u}\in C(\widetilde\Omega)$ satisfies the mean-value property in $\widetilde{\Omega}$. Therefore, $\tilde{u}$ is also harmonic in $\widetilde\Omega$. (See, e.g., \cite{GT}*{Theorem 2.7}.) By the second line of \eqref{equzero}, 
\begin{equation*}
   \partial_\nu \tilde{u}=0  \quad \mbox{on}~\partial\mathcal{D}_{1}\cup\partial\mathcal{D}_{2}.
\end{equation*}
Since $\varphi=\hat{f}(r,x_n)\,Y_{k,i}(\xi)$, by the symmetry of $Y_{k,i}$ ($k\ge 1$), we also have 
\begin{equation*}
    \tilde{u}(x)=\hat{f}(|x'|,x_n)\fint_{\bS^{n-2}}Y_{k,i}(\xi)\,d\xi=0  \quad \mbox{on}~\partial\Omega.
\end{equation*}
By the maximum principle, we can conclude that
$$\tilde{u}\equiv 0 \quad \mbox{in } \widetilde\Omega.$$
In particular, we have 
$$u(0,x_n)=0 \quad \mbox{for any } (0,x_n)\in \widetilde\Omega.$$
Therefore, by \eqref{def:Uki}, \eqref{Ulj-d} holds.

Next we treat the case when $k=0$. Since $u$ is smooth in $\widetilde\Omega$, for any $(0,x_n)\in \widetilde\Omega$, by \eqref{sph-decom} and the symmetry of $Y_{k,i}$ ($k\ge 1$), we have 
$$
\begin{aligned}
    u(0,x_n)&=\lim_{\delta\to 0}\fint_{\bS^{n-2}}u(\delta,\xi,x_n)\,d\xi\\
    &=\lim_{\delta\to 0}\fint_{\bS^{n-2}}U_{0,1}(\delta,x_n)Y_{0,1}\,d\xi=Y_{0,1}U_{0,1}(0,x_n).
\end{aligned}
$$
Here we used the fact that $N(0)=1$ and $Y_{0,1}$ is a constant.
Thus by \eqref{sph-decom} again, we have
$$
\sum_{l=1}^\infty \sum_{j=1}^{N(l)} U_{l,j}(0,x_n)Y_{l,j}(\xi)=0 \quad \mbox{for any } (0,x_n)\in \widetilde\Omega.
$$
Therefore, by the orthogonality of $Y_{l,j}$, for any $l\ge 1$, $$U_{l,j}(0,x_n)=0$$
and \eqref{Ulj-d} holds. The claim is proved.

Note that by \eqref{def:Uki}, $U_{l,j}\in C^2(\hat\Omega)\cap C(\overline{\hat\Omega})$.
By \eqref{separation_pde}, \eqref{bcn:Uki}, \eqref{bcd:Uki}, \eqref{Ulj-d}, we know that $U_{l,j}$ solves the following boundary value problem
$$
\left\{
\begin{aligned}
    \Big(\frac{\partial^2}{\partial x_n^2}+\frac{\partial^2}{\partial r^2}+\frac{n-2}{r}\frac{\partial}{\partial r}-\frac{l(l+n-3)}{r^2}\Big)U_{l,j}&=0 && \mbox{in }\hat{\Omega},\\
     \partial_{\hat\nu} U_{l,j}&=0 &&\mbox{on }\hat\Upsilon_1\cup \hat\Upsilon_2, \\
     U_{l,j}&=0 &&\mbox{on } \partial\hat{\Omega}\setminus(\hat\Upsilon_1\cup \hat\Upsilon_2).
\end{aligned}
\right.
$$
By the maximum principle, it follows that for any $\{l,j\}\neq \{k,i\}$
$$U_{l,j}\equiv 0 \quad \mbox{in } \hat\Omega.$$ 
The lemma is proved.
\end{proof}
Recall the discussion in Section \ref{sec1.1}, it suffices to prove Theorem \ref{main-thm} for the cases where $u$ contains a single nonzero mode and where $u$ contains a single zero mode, which will be treated in Sections \ref{sec3} and \ref{sec4}, respectively.

\section{The case when \texorpdfstring{$u$}{u} has a single nonzero mode}\label{sec3}
In this section, we assume that $\varphi=\hat{f}(r,x_n)Y_{k,i}(\xi)$, where $k\ge 1$ and $1\le i\le N(k)$.  Then, by Lemma \ref{lem:single_decom},
\begin{equation}\label{nonzero-mode}
u=U_{k,i}(r,x_n)Y_{k,i}(\xi).
\end{equation}
Then from \eqref{v_bar_def} and the fact that $h_1$, $h_2$ are radial, we know that
$$
\bar{v}(y') = V(r) Y_{k,i}(\xi).
$$
From \eqref{equation_v_bar}, we deduce that $V(r)$ satisfies the ODE
\begin{equation}\label{ODE1}
V''(r) + b(r)V'(r) - \frac{k(k+n-3)}{r^2}V(r) = H(r),
\end{equation}
with $V(0) = 0$, where
\begin{equation}\label{b_expression}
b(r) = \frac{n-2}{r} + \frac{2(r-r_0)_+}{\varepsilon + (r-r_0)_+^2}
\end{equation}
and
\begin{align*}
H(r) =& \int_{\bS^{n-2}} \frac{(\dv F) Y_{k,i}(\xi)}{\varepsilon+(r-r_0)_+^2} \, d\xi \\
=& \int_{\bS^{n-2}} \frac{D_r F_r + \frac{1}{r} D_\xi F_\xi}{\varepsilon + (r-r_0)_+^2} Y_{k,i}(\xi) \, d\xi\\
=& \partial_r \left(\int_{\bS^{n-2}} \frac{F_r}{\varepsilon + (r-r_0)_+^2} Y_{k,i}(\xi) \, d\xi \right)\\ 
&+ \int_{\bS^{n-2}} \frac{2(r-r_0)_+F_r Y_{k,i}}{(\varepsilon+(r-r_0)_+^2)^2} - \frac{F_\xi D_\xi Y_{k,i}}{r(\varepsilon + (r-r_0)_+^2)}\, d\xi\\
=&: A'(r) + B(r), \quad 0 < r < 1.
\end{align*}
Now we estimate $A(r)$ and $B(r)$.
By \eqref{estimate_F} and the fact that $\gamma\in(0,1)$, for $r\in(0,1)$,
we have
\begin{equation}\label{est:A1}
\begin{aligned}
    &|A(r)|\le \frac{C\,|F|}{\varepsilon + (r-r_0)_+^2}\le \frac{C\,\varepsilon\,(r-r_0)_+}{\varepsilon + (r-r_0)_+^2}+\frac{C\,(r-r_0)_+^{1+\gamma}}{\varepsilon + (r-r_0)_+^2
    }\\
    &\le \frac{C\,(r-r_0)_+}{(\varepsilon + (r-r_0)_+^2)^{1/2}}+\frac{C\,(r-r_0)_+^\gamma}{(\varepsilon + (r-r_0)_+^2)^{1/2}}\le
    \frac{C\,(r-r_0)_+^\gamma}{(\varepsilon + (r-r_0)_+^2)^{1/2}}.
\end{aligned}
\end{equation}
Here and throughout this section, we use $C$ to denote a constant depending only on $n$, $r_0$, $k$, $\gamma$, $\|h_1\|_{C^{2,\gamma}}$, $\|h_2\|_{C^{2,\gamma}}$, and $\|\hat{f}\|_{C^{1,1}}$, which may differ from line to line, unless otherwise specified.
Similarly, by the estimates in \eqref{est:A1}, for $r\in(0,1)$, it also holds that
\begin{equation}\label{est:B1}
\begin{aligned}
   &|B(r)|\le \frac{C\,|F|}{\varepsilon + (r-r_0)_+^2}\cdot \frac{(r-r_0)_+}{\varepsilon + (r-r_0)_+^2}+\frac{C\,|F|}{r(\varepsilon + (r-r_0)_+^2)}
   \\&\le \frac{C\,(r-r_0)_+^\gamma}{(\varepsilon + (r-r_0)_+^2)^{1/2}}\cdot \frac{1}{(\varepsilon + (r-r_0)_+^2)^{1/2}}+\frac{C\,(r-r_0)_+^\gamma}{r(\varepsilon + (r-r_0)_+^2)^{1/2}}.
\end{aligned}
\end{equation}
Note that when $r\le r_0$, the right-hand side of \eqref{est:B1} is exactly zero. Therefore, we can estimate
$$
\begin{aligned}
&|B(r)|\le \frac{C\,(r-r_0)_+^\gamma}{\varepsilon + (r-r_0)_+^2}+\frac{C\,(r-r_0)_+^\gamma}{r(\varepsilon + (r-r_0)_+^2)^{1/2}}
\\&\le \frac{C\,(r-r_0)_+^\gamma}{\varepsilon + (r-r_0)_+^2}+\frac{C\,(r-r_0)_+^\gamma}{r_0(\varepsilon + (r-r_0)_+^2)^{1/2}}\le \frac{C\,(r-r_0)_+^\gamma}{\varepsilon + (r-r_0)_+^2}.
\end{aligned}
$$
In summary, we have
\begin{equation}\label{estimate_AB}
|A(r)| \le \frac{C\,(r-r_0)_+^\gamma}{(\varepsilon + (r-r_0)_+^2)^{1/2}}, \quad |B(r)| \le \frac{C\,(r-r_0)_+^\gamma}{\varepsilon + (r-r_0)_+^2} , \quad 0 < r < 1.
\end{equation}

\begin{lemma}
\label{ODE_lemma}
For $\varepsilon > 0$, $n \ge 2$, $k\ge 1$, $r_0\in(0,1/2)$, there exists a unique solution $h \in C([0,1]) \cap C^{2,1}((0,1])$ of
\begin{equation*}
Lh :=h''(r) + b(r) h'(r) - \frac{k(k+n-3)}{r^2} h(r) = 0, \quad 0 < r <1
\end{equation*}
satisfying $h(0) = 0$ and $h(1) = 1$, where $b(r)$ is given as in \eqref{b_expression}.
Moreover, there exists a positive constant $C$ depending only on $n$, and $r_0$, such that
\begin{equation}
\label{h_bounds}
r^k  < h(r) < 1, \quad |h'(r)| \le C  \quad \mbox{for}~~0 < r < 1.
\end{equation}
\end{lemma}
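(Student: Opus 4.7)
I would start by exploiting the fact that on $[0,r_0]$ the coefficient $b(r)$ reduces to $(n-2)/r$, so $Lh=0$ becomes the Euler equation $r^2 h''+(n-2)rh'-k(k+n-3)h=0$ with indicial roots $k$ and $-(k+n-3)$. Thus every $C([0,r_0])$ solution with $h(0)=0$ must be of the form $h(r)=c\,r^k$ on $[0,r_0]$ for some $c\in\bR$. On $[r_0,1]$ the coefficient $b$ is bounded and Lipschitz, so I would construct $h$ by shooting: let $\tilde h$ denote the unique $C^{2,1}([r_0,1])$ solution of $L\tilde h=0$ with $\tilde h(r_0)=r_0^k$ and $\tilde h'(r_0)=k r_0^{k-1}$; set $h(r)=c\,r^k$ on $[0,r_0]$ and $h(r)=c\,\tilde h(r)$ on $[r_0,1]$; and choose $c=1/\tilde h(1)$ so that $h(1)=1$. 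Uniqueness of the BVP is then immediate from the maximum principle, as the zeroth-order coefficient $-k(k+n-3)/r^2$ of $L$ is non-positive.

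The required positivity $\tilde h(1)>0$ I would handle by a monotonicity argument. Since $\tilde h(r_0),\tilde h'(r_0)>0$, if $\tilde h'$ first hits $0$ at some $r^*\in(r_0,1]$ with $\tilde h(r^*)>0$, then $\tilde h''(r^*)=-b(r^*)\tilde h'(r^*)+(k(k+n-3)/(r^*)^2)\tilde h(r^*)\ge 0$, which prevents $\tilde h'$ from crossing $0$ from above. Thus $\tilde h'>0$ and $\tilde h>0$ on $[r_0,1]$. In particular $c>0$, and combined with the two-sided bound below, $c\in[1,r_0^{-k})$.

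The bound $r^k<h<1$ rests on the identity
\[
L(r^k) = k r^{k-2}\bigl(rb(r)-(n-2)\bigr) = \frac{2k\,r^{k-1}(r-r_0)_+}{\varepsilon+(r-r_0)_+^2}\ge 0,
\]
which is strictly positive on $(r_0,1)$, together with $L(1)=-k(k+n-3)/r^2\le 0$. Since $r^k$ and $1$ match $h$ at the endpoints of $[0,1]$, the weak maximum principle gives $r^k\le h\le 1$, and strict inequality on $(r_0,1)$ follows from the strong maximum principle applied to $h-r^k$ and $1-h$ (in the degenerate case $n=2$, $k=1$, where $L(1)=0$, the ordinary maximum principle suffices since $1$ is itself a solution of $L(\cdot)=0$). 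On $(0,r_0]$, where $h=c\,r^k$, the value $c=1$ is ruled out by a Taylor expansion at $r_0^+$: setting $w:=h-r^k$, one checks $w(r_0)=w'(r_0)=w''(r_0)=0$ and
\[
w'''(r_0^+)=-\bigl[L(r^k)\bigr]'(r_0^+)=-\frac{2k r_0^{k-1}}{\varepsilon}<0,
\]
which forces $w<0$ just above $r_0$, contradicting $h\ge r^k$; hence $c>1$ and $h>r^k$ on $(0,r_0]$.

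For the gradient bound, on $[0,r_0]$ we have $|h'(r)|=ck r^{k-1}\le k/r_0$ from $c<r_0^{-k}$ and $r^{k-1}\le r_0^{k-1}$. On $[r_0,1]$ I would rewrite $Lh=0$ as the first-order equation $p'+b(r)p=(k(k+n-3)/r^2)h$ with $p=h'$, and apply the integrating factor
\[
\mu(r):=\exp\!\Big(\int_{r_0}^r b\Big)=(r/r_0)^{n-2}\,\frac{\varepsilon+(r-r_0)^2}{\varepsilon},
\]
which yields
\[
|h'(r)|\le \mu(r)^{-1}|h'(r_0)|+\mu(r)^{-1}k(k+n-3)\int_{r_0}^r \mu(s)h(s)/s^2\,ds.
\]
Using $h\le 1$, $\mu(r)^{-1}\le \varepsilon/(\varepsilon+(r-r_0)^2)$, and the elementary estimates $\varepsilon(r-r_0)/(\varepsilon+(r-r_0)^2)\le 1$ and $(r-r_0)^3/(\varepsilon+(r-r_0)^2)\le r-r_0$, all $\varepsilon$-dependence cancels and one obtains $|h'|\le C(n,k,r_0)$. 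I expect the main technical difficulty to be precisely this last cancellation on $[r_0,1]$: the $1/\varepsilon$ factor hidden in $\mu$ inside the integral must be balanced exactly by the $\varepsilon/(\varepsilon+(r-r_0)^2)$ coming from $\mu^{-1}$, and one has to combine these with the polynomial growth from integrating $\mu(s)/s^2$ to see that the bound remains uniform as $\varepsilon\to 0$.
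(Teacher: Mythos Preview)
Your argument is correct and follows essentially the same strategy as the paper for the bounds $r^k<h<1$ and for the estimate $|h'|\le C$ via an integrating-factor representation. The construction of $h$, however, is genuinely different. The paper builds $h$ by an approximation scheme: for each $a\in(0,r_0)$ it solves the two-point BVP $Lh_a=0$ on $(a,1)$ with $h_a(a)=a^k$, $h_a(1)=1$, obtains the comparison $r^k<h_a<1$ and the upper envelope $h_a\le r^k/r_0^k$ on $(a,r_0)$ by the maximum principle, and then passes to the limit $a\to0$ in $C^2_{\mathrm{loc}}$. You instead exploit the explicit form of the equation on $(0,r_0]$ (an Euler ODE with solution $c\,r^k$) and build $h$ by a shooting/matching construction from $r_0$, using a monotonicity argument to guarantee $\tilde h(1)>0$. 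Your route is more elementary and avoids the limiting argument, while the paper's approximation approach is more robust in that it does not rely on the exact solvability of the equation on any subinterval. Your Taylor-expansion argument at $r_0^+$ to rule out $c=1$ is also a nice alternative to the paper's one-line appeal to the strong maximum principle for $h-r^k$ on the full interval. One minor remark: in both your proof and the paper's, the resulting constant $C$ in $|h'|\le C$ depends on $k$ as well as on $n$ and $r_0$; this is harmless for the application in Section~\ref{sec3}, where $k$ is a fixed mode.
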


\begin{proof}
Because of the singularity of $L$ at $r = 0$, we construct the solution $h$ through the approximating process as in the proofs of \cite{DLY}*{Lemma 3.1}. For $0 < a < r_0$, let $h_a \in C^2([a,1])$ be the solution of $Lh_a = 0$ in $(a, 1)$ satisfying $h_a(a) = a^k$ and $h_a(1) = 1$. By direct computations, we have
\begin{equation*}
Lr^k =  \frac{2k(r-r_0)_+}{\varepsilon + (r-r_0)_+^2} r^{k-1}\ge 0 \quad\quad \mbox{for}~r\in(0,1),
\end{equation*}
Note that $a^k=h_a(a)< 1$ since $k\ge 1$.
By the strong maximum principle,
$$
r^k< h_a(r) < 1, \quad a < r < 1.
$$
On the other hand, since $Lr^k=0$ for $r\in(a,r_0)$, $h_a(a)=a^k$, $h_a(r_0)<1=r_0^k/r_0^k$, by the comparison principle, we have 
$$h_a(r)\le r^k/r_0^k, \quad a<r<r_0.$$
Sending $a \to 0$, along a subsequence, $h_a \to h$ in $C^2_{\text{loc}}((0,1])$ for some $h \in C([0,1]) \cap C^\infty((0,1])$ satisfying $r^k \le h(r) \le 1$, $Lh = 0$ in $(0,1)$, and $h(0) = 0$. By the strong maximum principle applied to $h(r) - r^k$ and $1 - h(r)$,
$$
r ^k< h(r) < 1, \quad 0 < r < 1.
$$
We then prove that $|h'(r)| \le C$. Since $h$ satisfies
$$
h''(r) + \frac{n-2}{r} h'(r) -\frac{k(k+n-3)}{r^2} h(r) = 0, \quad 0 < r < r_0,
$$
and $h(0) = 0$, we know that
$$
h(r) = C_1r^k, \quad 0 < r < r_0
$$
for some positive constant $C_1$. The upper bound $h(r_0) \le 1$ implies that $C_1 \le 1/r_0^k$, therefore,
\begin{equation}\label{h'1}
h'(r) = C_1k\,r^{k-1} \le \frac{k}{r_0}, \quad 0 < r < r_0.
\end{equation}
For $r \ge r_0/2$, we can use integrating factor to express $h'(r)$ in terms of $h(r)$, that is
\begin{equation}\label{h'2}
\begin{aligned}
h'(r) = e^{-\int_{r_0/2}^r b(s) \, ds} h'\Big(\frac{r_0}{2}\Big) + k(k+n-3) e^{-\int_{r_0/2}^r b(s) \, ds} \int_{r_0/2}^r e^{\int_{r_0/2}^t b(s) \, ds} \frac{h(t)}{t^2} \, dt.
\end{aligned}
\end{equation}
Since $b \in  C^{0,1}((0,1])$, we can see that $h \in  C^{2,1}((0,1])$. Next we compute for $t \ge r_0/2$,
\begin{equation}\label{integral_b}
\begin{aligned}
\int_{r_0/2}^t b(s) \, ds =& \int_{r_0/2}^t \frac{n-2}{s} + \frac{2(s-r_0)_+}{\varepsilon + (s - r_0)_+^2} \, ds\\
=& (n-2) \Big( \log t - \log (r_0/2) \Big) + \log \left( \frac{\varepsilon + (t - r_0)_+^2}{\varepsilon} \right).
\end{aligned}
\end{equation}
Therefore, for $t \in [r_0/2,1]$ and $r \in [r_0/2,1]$,
$$
e^{\int_{r_0/2}^t b(s) \, ds} \le C \frac{\varepsilon + (t - r_0)_+^2}{\varepsilon}, \quad e^{-\int_{r_0/2}^r b(s) \, ds} \le  \frac{C\varepsilon}{\varepsilon + (r - r_0)_+^2}.
$$
Plugging into \eqref{h'2}, and using \eqref{h'1} and $h \le 1$, we can conclude $|h'| \le C$.
\end{proof}

Let $h(r)$ be the unique solution as in Lemma \ref{ODE_lemma}. 
We will solve \eqref{ODE1} by the method of reduction of order to obtain the desired estimates. Let $w(r) = V(r)/h(r)$. Then $w(r)$ satisfies
$$
w''(r) + \left( \frac{2h'(r)}{h(r)} + b(r) \right) w'(r) = \frac{H(r)}{h(r)}, \quad 0 < r < 1.
$$
Using the integrating factor again, we can write
\begin{equation}\label{w'}
\begin{aligned}
w'(r) =& e^{-\int_{r_0/2}^r \left(\frac{2h'(s)}{h(s)} + b(s)\right) \, ds}w'\Big(\frac{r_0}{2}\Big)\\
&+  e^{-\int_{r_0/2}^r \left(\frac{2h'(s)}{h(s)} + b(s)\right) \, ds} \int_{r_0/2}^r  e^{\int_{r_0/2}^t \left(\frac{2h'(s)}{h(s)} + b(s)\right) \, ds} \frac{A'(t) + B(t)}{h(t)} \, dt.
\end{aligned}
\end{equation}
Similar as in \eqref{integral_b}, we can use \eqref{h_bounds} to estimate
\begin{equation}\label{integrating_factor}
\begin{aligned}
e^{\int_{r_0/2}^t \left(\frac{2h'(s)}{h(s)} + b(s)\right) \, ds} &\le C \frac{\varepsilon + (t - r_0)_+^2}{\varepsilon}, \\ 
e^{-\int_{r_0/2}^r \left(\frac{2h'(s)}{h(s)} + b(s)\right) \, ds} &\le  \frac{C\varepsilon}{\varepsilon + (r - r_0)_+^2}.    
\end{aligned}
\end{equation}
Now we estimate the right-hand side of \eqref{w'}. For the term containing $A'$, we use integration by parts to obtain
\begin{align*}
&\int_{r_0/2}^r  e^{\int_{r_0/2}^t \left(\frac{2h'(s)}{h(s)} + b(s)\right) \, ds} \frac{A'(t)}{h(t)} \, dt\\
&= \frac{A(r)}{h(r)}e^{\int_{r_0/2}^r \left(\frac{2h'(s)}{h(s)} + b(s)\right) \, ds}\\
&\quad - \int_{r_0/2}^r  e^{\int_{r_0/2}^t \left(\frac{2h'(s)}{h(s)} + b(s)\right) \, ds} \frac{A(t)}{h(t)} \left( \frac{2h'(t)}{h(t)} + b(t) \right) \, dt\\
&\quad + \int_{r_0/2}^r  e^{\int_{r_0/2}^t \left(\frac{2h'(s)}{h(s)} + b(s)\right) \, ds} \frac{A(t)h'(t)}{h(t)^2}\, dt\\
&=: \rm{I}(r) + \rm{II}(r) + \rm{\rm{III}}(r).
\end{align*}
By \eqref{estimate_AB} and \eqref{integrating_factor}, we can estimate
\begin{equation}\label{est:A-1}
\begin{aligned}
    &\left| e^{-\int_{r_0/2}^r \left(\frac{2h'(s)}{h(s)} + b(s)\right) \, ds} \rm{I}(r) \right|\\
    &\le \frac{C\varepsilon}{\varepsilon + (r - r_0)_+^2} \cdot \frac{(r-r_0)_+^\gamma}{(\varepsilon + (r-r_0)_+^2)^{1/2}} \cdot \frac{\varepsilon + (r - r_0)_+^2}{\varepsilon}\\
    &\le  C\,\big(\varepsilon + (r-r_0)_+^2\big)^{\frac{\gamma-1}{2}}.
\end{aligned}
\end{equation}
Recall the definition of $b$ in \eqref{b_expression}, by \eqref{estimate_AB} and \eqref{integrating_factor} again, we can estimate
\begin{align*}
    |\rm{II}(r)| \le & C \int_{r_0/2}^r \frac{\varepsilon + (t - r_0)_+^2}{\varepsilon}\cdot \frac{(t-r_0)_+^\gamma}{(\varepsilon + (t-r_0)_+^2)^{1/2}} \cdot \Big( 1 +  \frac{(t-r_0)_+}{\varepsilon + (t-r_0)_+^2} \Big) \, dt\\
    \le & \frac{C}{\varepsilon} \int_{r_0/2}^r (t-r_0)_+^\gamma \, dt\\
    \le & \frac{C}{\varepsilon} (r-r_0)_+^{\gamma+1}.
\end{align*}
Therefore,
\begin{equation}\label{est:A-2}
\begin{aligned}
    \left| e^{-\int_{r_0/2}^r \left(\frac{2h'(s)}{h(s)} + b(s)\right) \, ds} \rm{II}(r) \right| \le & \frac{C\varepsilon}{\varepsilon + (r - r_0)_+^2} \cdot \frac{1}{\varepsilon} (r-r_0)_+^{\gamma+1}\\
    \le & C\,\big(\varepsilon + (r-r_0)_+^2\big)^{\frac{\gamma-1}{2}}.
\end{aligned}
\end{equation}
Similarly, we have
\begin{equation}\label{est:A-3}
\begin{aligned}
     & \left| e^{-\int_{r_0/2}^r \left(\frac{2h'(s)}{h(s)} + b(s)\right) \, ds} \rm{III}(r) \right|\\
     &\le  \frac{C\varepsilon}{\varepsilon + (r - r_0)_+^2} \int_{r_0/2}^r \frac{\varepsilon + (t - r_0)_+^2}{\varepsilon}\cdot \frac{(t-r_0)_+^\gamma}{(\varepsilon + (t-r_0)_+^2)^{1/2}} \, dt\\
     &\le  \frac{C\varepsilon}{\varepsilon + (r - r_0)_+^2} \cdot \frac{1}{\varepsilon} (r-r_0)_+^{\gamma+1}\\
     &\le  C\,\big(\varepsilon + (r-r_0)_+^2\big)^{\frac{\gamma-1}{2}}.
\end{aligned}
\end{equation}
Combining \eqref{est:A-1}-\eqref{est:A-3}, we have
\begin{equation}\label{est:A}
\left| e^{-\int_{r_0/2}^r \left(\frac{2h'(s)}{h(s)} + b(s)\right) \, ds} \int_{r_0/2}^r  e^{\int_{r_0/2}^t \left(\frac{2h'(s)}{h(s)} + b(s)\right) \, ds} \frac{A'(t)}{h(t)} \, dt \right| \le C\,\big(\varepsilon + (r-r_0)_+^2\big)^{\frac{\gamma-1}{2}}.
\end{equation}
Similarly, we can also estimate
\begin{equation}\label{est:B}
\begin{aligned}
    & \left| e^{-\int_{r_0/2}^r \left(\frac{2h'(s)}{h(s)} + b(s)\right) \, ds} \int_{r_0/2}^r  e^{\int_{r_0/2}^t \left(\frac{2h'(s)}{h(s)} + b(s)\right) \, ds} \frac{B(t)}{h(t)} \, dt \right|\\
     &\le  \frac{C\varepsilon}{\varepsilon + (r - r_0)_+^2} \int_{r_0/2}^r \frac{\varepsilon + (t - r_0)_+^2}{\varepsilon}\cdot \frac{(t-r_0)_+^\gamma}{\varepsilon + (t-r_0)_+^2} \, dt\\
     &\le  \frac{C}{\varepsilon + (r - r_0)_+^2} \cdot (r-r_0)_+^{\gamma+1}\\
    &\le  C\,\big(\varepsilon + (r-r_0)_+^2\big)^{\frac{\gamma-1}{2}}.
\end{aligned}
\end{equation}
Finally, by the definition of $w$, we have
$$
w'(r) = \frac{V'(r)}{h(r)} - \frac{h'(r)V(r)}{h^2(r)}.
$$
Therefore, $|w'(r_0/2)| \le C$ due to \eqref{Dv_uniform_bound} and \eqref{h_bounds}. Then from \eqref{w'}, \eqref{est:A}, and \eqref{est:B}, we conclude that
$$|w'(r)| \le C+C\,\big(\varepsilon + (r-r_0)_+^2\big)^{\frac{\gamma-1}2}, \quad  r\in [r_0/2, 1).$$  
This implies
\begin{equation}\label{V'-bound}
\begin{aligned}
|V'(r)| &=  |h'(r) w(r) + h(r) w'(r)| \le C+ C\,\big(\varepsilon + (r-r_0)_+^2\big)^{\frac{\gamma-1}2}\\ &\le C\,\big(\varepsilon + (r-r_0)_+^2\big)^{\frac{\gamma-1}2}, \quad r \in [r_0/2, 1).
\end{aligned}
\end{equation}
Now we are ready to prove Theorem \ref{main-thm} under the assumption \eqref{nonzero-mode}.
\subsection{Proof of Theorem \ref{main-thm} when  \texorpdfstring{$u$}{u} has a single nonzero mode}\label{sec3.1}
\begin{proof}
Without loss of generality, we may assume $\varepsilon<1/4$.
We prove \eqref{main-eq} using a bootstrap argument. Let $v$, $\bar{v}$, $V$ be defined as above. First, by the argument as above, from \eqref{V'-bound} we obtain that 
\begin{equation}\label{bar_v_bound}
    |D_{y'}\bar{v}(y')|\le C \,\big(\varepsilon + (|y'|-r_0)_+^2\big)^{-s_0+\frac{\gamma}{2}}, \quad r_0/2\le |y'|\le 1,
\end{equation}
where $s_0=1/2$.
Recall the definition of $C_{x,s}$ in \eqref{C_x_s}. By \eqref{Dnv-bound},
\begin{equation}
\label{v-v_bar}
|v(y', y_n) - \bar{v}(y')| \le 2\varepsilon \max_{y_n\in (- \varepsilon , \varepsilon)} |\partial_n v(y',y_n)| \le C (\varepsilon + (|y'|-r_0)_+^2) \quad \mbox{in}~~C_{0,1}.
\end{equation}
Let $x\in \Omega_1$. We denote $\eta=\frac{1}{4}(\varepsilon + (|x'|-r_0)_+^2)^{1/2}$.
By \eqref{Dv_uniform_bound} and the change of variables in \eqref{x_to_y}, we have 
$$
\| Du\|_{L^\infty (\Omega_{3r_0/4})} \le C.
$$
When $3r_0/4\le|x'|\le r_0+\sqrt\varepsilon$, 
by \eqref{bar_v_bound} and \eqref{v-v_bar}, we have
\begin{equation}
\begin{aligned}\label{v_L2_control}
\underset{\Omega_{x,\eta}}{\osc}~u =&  \underset{C_{x,\eta}}{\osc}~v \\
 \le& C\big( \|v-\bar{v}\|_{L^\infty(C_{x,\eta})} + \underset{C_{x,\eta}}{\osc}~\bar{v}\big)\\
 \le & C\varepsilon+C\sqrt{\varepsilon} \,\|D_{y'}\bar{v}\|_{L^\infty(C_{x,\eta})}\\
 \le& C \varepsilon^{-s_0+\frac{1+\gamma}2}.
\end{aligned}
\end{equation}
By \eqref{v_L2_control} and \eqref{Du-bound}, we have
$$
\| Du\|_{L^\infty (\Omega_{r_0+\sqrt\varepsilon}\setminus \Omega_{3r_0/4})} \le C\varepsilon^{-s_0+\frac{\gamma}2}.
$$
When $r_0+\sqrt\varepsilon \le|x'|\le 3/4$, by the triangle inequality, for $y\in C_{x,\eta}$ we have 
$$ 0<\frac{1}{2}(|x'|-r_0)<|x'|-r_0-\eta\le|y'|-r_0\le |x'|-r_0+\eta\le 5\eta,$$
and thus
$$\frac{1}{4}(\varepsilon + (|x'|-r_0)_+^2)\le \varepsilon + (|y'|-r_0)_+^2\le 4(\varepsilon + (|x'|-r_0)_+^2).$$
Then by \eqref{bar_v_bound} and \eqref{v-v_bar} again, we have
$$
\underset{\Omega_{x,\eta}}{\osc}~u\le C (\varepsilon + (|x'|-r_0)_+^2)^{-s_0+\frac{1+\gamma}2}, 
$$
which implies, by \eqref{Du-bound}, that
$$
| Du(x)| \le C (\varepsilon + (|x'|-r_0)_+^2)^{-s_0+\frac{\gamma}2}, \quad x\in \Omega_{3/4}\setminus \Omega_{r_0+2\sqrt{\varepsilon}}.
$$
Therefore, we have improved the upper bound $$
|D u(x)| \le C\,(\varepsilon + (|x'|-r_0)_+^2)^{-s_0}$$ 
to $$
|Du(x)| \le C\,(\varepsilon + (|x'|-r_0)_+^2)^{-s_0+\gamma/2}$$ 
for $x\in \Omega_{3/4}$ and $s_0=1/2$. Then we take $s_1 = s_0 - \frac{\gamma}{2}$ and repeat the argument above. In the general step, if we have already obtained an upper bound
\begin{equation}
    \label{eq12.40}
|D u(x)| \le C\,(\varepsilon + (|x'|-r_0)_+^2)^{-s}\,\, \text{with } s\in(0,1/2), \,\, \text{for } x\in \Omega_{3/4},
\end{equation}
then instead of \eqref{estimate_F}, we have
$$
|F| \le C\varepsilon\,(|y'|-r_0)_+ +C\,(|y'|-r_0)_+^{2+\gamma-2s}.
$$
Consequently, the estimates in \eqref{estimate_AB} can be replaced with
$$
|A(r)| \le \frac{C\,(r-r_0)_+^{1-(2s-\gamma)_+}}{(\varepsilon + (r-r_0)_+^2)^{1/2}}, \quad |B(r)| \le \frac{C\,(r-r_0)_+^{1-(2s-\gamma)_+}}{\varepsilon + (r-r_0)_+^2} , \quad 0 < r < 1.
$$
Therefore, by following the argument above, one can obtain 
$$
|V'(r)| \le C+ C\,\big(\varepsilon + (r-r_0)_+^2\big)^{\frac{-(2s-\gamma)_+}2} \le C\,\big(\varepsilon + (r-r_0)_+^2\big)^{-\hat{s}}, \quad r \in [r_0/2, 1),
$$
where $\hat{s}=(s-\gamma/2)_+$.
Then we can improve the upper bound 
\eqref{eq12.40} to 
$$
|Du(x)| \le C\,(\varepsilon + (|x'|-r_0)_+^2)^{-\hat{s}} \quad \text{for } x\in \Omega_{3/4},
$$ 
as above.
After repeating the argument finite times and using the classical estimates \eqref{u_C1_outside}, we obtain the uniform bound \eqref{main-eq}. The proof is completed.
\end{proof}

\section{The case when \texorpdfstring{$u$}{u} has a single zero mode}\label{sec4}
In this section, we assume that $\varphi=\hat{f}(r,x_n)Y_{0,1}(\xi)$ and thus by Lemma \ref{lem:single_decom},
\begin{equation*}
    u=U_{0,1}(r,x_n).
\end{equation*}
Then from \eqref{v_bar_def}, we know that
$$
\bar{v}(y') = V(r)
$$
is a radial function. Similarly as in Section \ref{sec3}, we can deduce from \eqref{equation_v_bar} that $V(r)$ satisfies the ODE
\begin{equation*}
V''(r) + b(r)V'(r) = H(r),
\end{equation*}
 where $b(r)$ is given as in \eqref{b_expression}, and
\begin{align*}
H(r) =& \int_{\bS^{n-2}} \frac{\dv F}{\varepsilon+(r-r_0)_+^2} \, d\xi \\
=& \int_{\bS^{n-2}} \frac{D_r F_r + \frac{1}{r} D_\xi F_\xi}{\varepsilon + (r-r_0)_+^2} \, d\xi\\
=& \partial_r \left(\int_{\bS^{n-2}} \frac{F_r}{\varepsilon + (r-r_0)_+^2}  \, d\xi \right) + \int_{\bS^{n-2}} \frac{2(r-r_0)_+F_r}{(\varepsilon+(r-r_0)_+^2)^2}\\
=&: A'(r) + B(r), \quad 0 < r < 1,
\end{align*}
where $A$ and $B$ satisfy the same estimates \eqref{estimate_AB} and $F_\xi=0$. We can use integrating factor again to obtain
\begin{equation}\label{if-2}
V'(r) = e^{-\int_{r_0/2}^r b(s) \, ds} V'\Big(\frac{r_0}{2}\Big) + e^{-\int_{r_0/2}^r b(s) \, ds} \int_{r_0/2}^r e^{\int_{r_0/2}^t b(s) \, ds} (A'(t) + B(t)) \, dt.
\end{equation}
The right-hand side of \eqref{if-2} can be estimated in the same way as \eqref{w'} by setting $h \equiv 1$. Therefore, from \eqref{estimate_AB} we obtain
\begin{equation}\label{V'-bound-2}
    | V'(r)|  \le C+C\,\big(\varepsilon + (r-r_0)_+^2\big)^{\frac{\gamma-1}2} \le C\,\big(\varepsilon + (r-r_0)_+^2\big)^{\frac{\gamma-1}2}, \quad  r\in [r_0/2, 1),
\end{equation}
where $C>0$ depends only on $n$, $r_0$, $\gamma$, $\|h_1\|_{C^{2,\gamma}}$, $\|h_2\|_{C^{2,\gamma}}$, and $\|\hat{f}\|_{C^{1,1}}$.
\subsection{Proof of Theorem \ref{main-thm} when \texorpdfstring{$u$}{u} has a single zero mode}
With the estimate \eqref{V'-bound-2} in place of \eqref{V'-bound}, one can follow the same bootstrap argument as in Section \ref{sec3.1} to prove the uniform bound \eqref{main-eq}. Therefore, we omit the details.

\bibliographystyle{amsplain}
\begin{bibdiv}
\begin{biblist}

\bib{ACKLY}{article}{
      author={Ammari, H.},
      author={Ciraolo, G.},
      author={Kang, H.},
      author={Lee, H.},
      author={Yun, K.},
       title={Spectral analysis of the {N}eumann-{P}oincar\'{e} operator and
  characterization of the stress concentration in anti-plane elasticity},
        date={2013},
        ISSN={0003-9527},
     journal={Arch. Ration. Mech. Anal.},
      volume={208},
      number={1},
       pages={275\ndash 304},
  url={https://doi-org.proxy.libraries.rutgers.edu/10.1007/s00205-012-0590-8},
      review={\MR{3021549}},
}

\bib{AKLLL}{article}{
      author={Ammari, H.},
      author={Kang, H.},
      author={Lee, H.},
      author={Lee, J.},
      author={Lim, M.},
       title={Optimal estimates for the electric field in two dimensions},
        date={2007},
        ISSN={0021-7824},
     journal={J. Math. Pures Appl. (9)},
      volume={88},
      number={4},
       pages={307\ndash 324},
  url={https://doi-org.proxy.libraries.rutgers.edu/10.1016/j.matpur.2007.07.005},
      review={\MR{2384571}},
}

\bib{AKL}{article}{
      author={Ammari, H.},
      author={Kang, H.},
      author={Lim, M.},
       title={Gradient estimates for solutions to the conductivity problem},
        date={2005},
        ISSN={0025-5831},
     journal={Math. Ann.},
      volume={332},
      number={2},
       pages={277\ndash 286},
  url={https://doi-org.proxy.libraries.rutgers.edu/10.1007/s00208-004-0626-y},
      review={\MR{2178063}},
}

\bib{BASL}{article}{
      author={Babu\v{s}ka, I.},
      author={Andersson, B.},
      author={Smith, P.J.},
      author={Levin, K.},
       title={Damage analysis of fiber composites. {I}. {S}tatistical analysis
  on fiber scale},
        date={1999},
        ISSN={0045-7825},
     journal={Comput. Methods Appl. Mech. Engrg.},
      volume={172},
      number={1-4},
       pages={27\ndash 77},
  url={https://doi-org.proxy.libraries.rutgers.edu/10.1016/S0045-7825(98)00225-4},
      review={\MR{1685902}},
}

\bib{BLY1}{article}{
      author={Bao, E.},
      author={Li, Y.Y.},
      author={Yin, B.},
       title={Gradient estimates for the perfect conductivity problem},
        date={2009},
        ISSN={0003-9527},
     journal={Arch. Ration. Mech. Anal.},
      volume={193},
      number={1},
       pages={195\ndash 226},
  url={https://doi-org.proxy.libraries.rutgers.edu/10.1007/s00205-008-0159-8},
      review={\MR{2506075}},
}

\bib{BLY2}{article}{
      author={Bao, E.},
      author={Li, Y.Y.},
      author={Yin, B.},
       title={Gradient estimates for the perfect and insulated conductivity
  problems with multiple inclusions},
        date={2010},
        ISSN={0360-5302},
     journal={Comm. Partial Differential Equations},
      volume={35},
      number={11},
       pages={1982\ndash 2006},
  url={https://doi-org.proxy.libraries.rutgers.edu/10.1080/03605300903564000},
      review={\MR{2754076}},
}

\bib{BV}{article}{
      author={Bonnetier, E.},
      author={Vogelius, M.},
       title={An elliptic regularity result for a composite medium with
  ``touching'' fibers of circular cross-section},
        date={2000},
        ISSN={0036-1410},
     journal={SIAM J. Math. Anal.},
      volume={31},
      number={3},
       pages={651\ndash 677},
  url={https://doi-org.proxy.libraries.rutgers.edu/10.1137/S0036141098333980},
      review={\MR{1745481}},
}

\bib{CLX}{article}{
      author={Chen, Y.},
      author={Li, H.},
      author={Xu, L.},
       title={Optimal gradient estimates for the perfect conductivity problem
  with {$C^{1, \alpha}$} inclusions},
        date={2021},
        ISSN={0294-1449,1873-1430},
     journal={Ann. Inst. H. Poincar\'e{} C Anal. Non Lin\'eaire},
      volume={38},
      number={4},
       pages={953\ndash 979},
         url={https://doi.org/10.1016/j.anihpc.2020.09.009},
      review={\MR{4266231}},
}

\bib{DLY2}{article}{
      author={Dong, H.},
      author={Li, Y.Y.},
      author={Yang, Z.},
       title={Gradient estimates for the insulated conductivity problem: {T}he
  non-umbilical case},
        date={2024},
        ISSN={0021-7824,1776-3371},
     journal={J. Math. Pures Appl. (9)},
      volume={189},
       pages={103587},
         url={https://doi.org/10.1016/j.matpur.2024.06.002},
      review={\MR{4779390}},
}

\bib{DLY}{article}{
      author={Dong, H.},
      author={Li, Y.Y.},
      author={Yang, Z.},
       title={Optimal gradient estimates of solutions to the insulated
  conductivity problem in dimension greater than two},
        date={2025},
        ISSN={1435-9855,1435-9863},
     journal={J. Eur. Math. Soc. (JEMS)},
      volume={27},
      number={8},
       pages={3275\ndash 3296},
         url={https://doi.org/10.4171/jems/1432},
      review={\MR{4911712}},
}

\bib{DYZ24}{article}{
      author={Dong, H.},
      author={Yang, Z.},
      author={Zhu, H.},
       title={Gradient estimates for the conductivity problem with imperfect
  bonding interfaces},
        date={2024},
         url={https://arxiv.org/abs/2409.05652},
        note={arXiv:2409.05652},
}

\bib{fukushima2024finiteness}{article}{
      author={Fukushima, S.},
      author={Ji, Y.-G.},
      author={Kang, H.},
      author={Li, X.},
       title={Finiteness of the stress in presence of closely located
  inclusions with imperfect bonding},
        date={2024},
     journal={Math. Ann.},
        note={DOI 10.1007/s00208-024-02968-9},
}

\bib{GT}{book}{
      author={Gilbarg, D.},
      author={Trudinger, N.},
       title={Elliptic partial differential equations of second order},
      series={Classics in Mathematics},
   publisher={Springer-Verlag, Berlin},
        date={2001},
        ISBN={3-540-41160-7},
      review={\MR{1814364}},
}

\bib{Kang}{incollection}{
      author={Kang, H.},
       title={Quantitative analysis of field concentration in presence of
  closely located inclusions of high contrast},
        date={[2023] \copyright2023},
   booktitle={I{CM}---{I}nternational {C}ongress of {M}athematicians. {V}ol. 7.
  {S}ections 15--20},
   publisher={EMS Press, Berlin},
       pages={5680\ndash 5699},
      review={\MR{4680458}},
}

\bib{li2024optimalgradientestimatesinsulated}{article}{
      author={Li, H.},
      author={Zhao, Y.},
       title={Optimal gradient estimates for the insulated conductivity problem
  with general convex inclusions case},
        date={2024},
         url={https://arxiv.org/abs/2404.17201},
        note={arXiv:2404.17201},
}

\bib{LN}{article}{
      author={Li, Y.Y.},
      author={Nirenberg, L.},
       title={Estimates for elliptic systems from composite material},
        date={2003},
        ISSN={0010-3640},
     journal={Comm. Pure Appl. Math.},
      volume={56},
      number={7},
       pages={892\ndash 925},
         url={https://doi-org.proxy.libraries.rutgers.edu/10.1002/cpa.10079},
      review={\MR{1990481}},
}

\bib{LV}{article}{
      author={Li, Y.Y.},
      author={Vogelius, M.},
       title={Gradient estimates for solutions to divergence form elliptic
  equations with discontinuous coefficients},
        date={2000},
        ISSN={0003-9527},
     journal={Arch. Ration. Mech. Anal.},
      volume={153},
      number={2},
       pages={91\ndash 151},
  url={https://doi-org.proxy.libraries.rutgers.edu/10.1007/s002050000082},
      review={\MR{1770682}},
}

\bib{LY2}{article}{
      author={Li, Y.Y.},
      author={Yang, Z.},
       title={Gradient estimates of solutions to the insulated conductivity
  problem in dimension greater than two},
        date={2023},
        ISSN={0025-5831},
     journal={Math. Ann.},
      volume={385},
      number={3-4},
       pages={1775\ndash 1796},
         url={https://doi.org/10.1007/s00208-022-02368-x},
      review={\MR{4566706}},
}

\bib{We}{article}{
      author={Weinkove, B.},
       title={The insulated conductivity problem, effective gradient estimates
  and the maximum principle},
        date={2023},
        ISSN={0025-5831},
     journal={Math. Ann.},
      volume={385},
      number={1-2},
       pages={1\ndash 16},
         url={https://doi.org/10.1007/s00208-021-02314-3},
      review={\MR{4542709}},
}

\bib{Y1}{article}{
      author={Yun, K.},
       title={Estimates for electric fields blown up between closely adjacent
  conductors with arbitrary shape},
        date={2007},
        ISSN={0036-1399},
     journal={SIAM J. Appl. Math.},
      volume={67},
      number={3},
       pages={714\ndash 730},
         url={https://doi-org.proxy.libraries.rutgers.edu/10.1137/060648817},
      review={\MR{2300307}},
}

\bib{Y2}{article}{
      author={Yun, K.},
       title={Optimal bound on high stresses occurring between stiff fibers
  with arbitrary shaped cross-sections},
        date={2009},
        ISSN={0022-247X},
     journal={J. Math. Anal. Appl.},
      volume={350},
      number={1},
       pages={306\ndash 312},
  url={https://doi-org.proxy.libraries.rutgers.edu/10.1016/j.jmaa.2008.09.057},
      review={\MR{2476915}},
}

\end{biblist}
\end{bibdiv}
\end{document}